\newenvironment{proof}[1][\proofname]{\par
  \normalfont \topsep6\p@\@plus6\p@\relax
  \trivlist
  \item[\hskip\labelsep
        \itshape
    #1\@addpunct{.}]\ignorespaces
}{%
  \endtrivlist\@endpefalse
}
\providecommand{\proofname}{Proof}
\def\newtheoremnn#1#2{%
  \expandafter\@ifdefinable\csname #1\endcsname
    {\@definecounter{#1}%
     \expandafter\xdef\csname the#1\endcsname{}%
     \global\@namedef{#1}{\@thm{#1}{#2.}}%
     \global\@namedef{end#1}{\@endtheorem}}}
\newtheorem{e-proposition}[theorem]{Proposition}
\newtheorem{e-definition}[theorem]{Definition\rm}
\renewcommand{\tilde}{\widetilde}
\def\weakly{\rightharpoonup}
\def\weakto{\weakly}
\def\weaklystar{\stackrel{\ast}{\rightharpoonup}}
\def\Div{\operatorname{div}}
\def\curl{\operatorname{curl}}
\def\bbR{\mathbb{R}}
\def\R{\mathbb{R}}
\def\bbN{\mathbb{N}}
\newcommand{\hide}[1]{{}}
\newcommand{\dv}[1]{{\rm d}#1}
\newcommand{\MM}[2]{\mathbb{M}^{#1\times #2}}
\begin{document}
\begin{center}
  {  \LARGE
The div--curl lemma for sequences\\
whose divergence and curl are compact in $W^{-1,1}$}\\[5mm]
{\today}\\[5mm]
Sergio Conti$^1$,
Georg Dolzmann$^2$,
Stefan M\"uller$^{1,3}$\\[2mm]
{\em $^{1}$  
Institut f\"ur Angewandte Mathematik,
Universit\"at Bonn  \\
Endenicher Allee 60,
53115 Bonn, Germany\\[1mm]
$^2$ Universit\"at Regensburg, 93040 Regensburg, Germany\\[1mm]
$^3$Hausdorff Center for Mathematics, 
Universit\"at Bonn  \\
Endenicher Allee 60, 
53115 Bonn, Germany
}
\\[3mm]
  \begin{minipage}[c]{0.8\textwidth}
It is shown that $u_k \cdot v_k$ converges weakly to $u\cdot v$ if $u_k\weakto
u$ weakly in $L^p$ 
and $v_k\weakly v$ weakly in $L^q$ with $p,\,q\in(1,\infty)$, $1/p+1/q=1$,
under the additional assumptions that the sequences $\Div u_k$ 
and $\curl v_k$ are compact in the dual space of $W^{1,\infty}_0$ and
that $u_k\cdot v_k$ is equi-integrable.
The main point is that we only require equi-integrability of the scalar product
$u_k\cdot v_k$ and not of the individual sequences.
  \end{minipage}
\end{center}

\section{Statement of the Theorem}
The div--curl lemma is the cornerstone of the theory of compensated
compactness  
which was developed by Murat and Tartar in the late 
seventies~\cite{Murat1978,Murat1981,Tartar1978,Tartar1979,Tartar1982}, and is
still a very 
active area of research~\cite{BrianeCasadoDiazMurat2009}.
 In its classical
form the lemma states the following: if $\{u_k\}_{k\in\bbN}$ and $\{v_k\}_{k\in\bbN}$
are sequences in $L^2(\Omega;\bbR^n)$ which converge weakly in
$L^2(\Omega;\bbR^n)$ to $u$ and $v$, respectively, and if 
 $\Div u_k$ is compact in $H^{-1}(\Omega)$
and $\curl v_k$ is compact in $H^{-1}(\Omega;\MM{n}{n})$, then
\begin{alignat*}{1}
 u_k\cdot v_k \weakto
u\cdot v
\quad\text{ in }\mathcal D^\prime(\Omega)\,.
\end{alignat*}
A natural  generalization concerned sequences bounded in $L^p(\Omega;\bbR^n)$
and $L^q(\Omega;\bbR^n)$, respectively, where $p,\,q\in(1,\infty)$ are dual
exponents, $1/p+1/q=1$,  $\Div u_k$ is compact in $W^{-1,p}(\Omega)$
and $\curl v_k$ is compact in $W^{-1,q}(\Omega;\MM{n}{n})$, respectively, see \cite{Murat1981}.
Important connections to Hardy spaces were established in~\cite{CoifmanEtAl}.

This note is inspired by questions in nonlinear models in 
crystal plasticity~\cite{ContiDolzmannKlust2} in a two-dimensional setting.
The key point in this context is to prove that the determinant of the 
deformation gradient $\det\nabla \varphi_k$ converges to $\det\nabla \varphi$
under the assumption that 
$\nabla \varphi_k=G_k+B_k$ where $G_k\weakly \nabla \varphi$ weakly in $L^2$ and
$B_k\to 0$ strongly in $L^1$. The key additional information is that $\det \nabla
\varphi_k$ is compact in $L^1$.

Motivated by this application,
we present a generalization of the div--curl lemma 
with very weak assumptions on
$\Div u_k$ and $\curl v_k$ and the additional assumption that $u_k\cdot v_k$
is equi-integrable (see the remarks after the Theorem). We denote the dual of 
 $W_0^{1,\infty}(\Omega)$ by $W^{-1,1}(\Omega)$. 

\begin{theoremnn}
  Let $\Omega\subset\R^n$ be an open and  bounded domain with Lipschitz boundary and
let  $p,\,q\in(1,\infty)$ with $1/p+1/q=1$.
Suppose $u_k\in L^p(\Omega;\R^n)$,
  $v_k\in L^q(\Omega;\R^n)$ are sequences
  such that
  \begin{equation}
 u_k\weakto u \hskip3mm\text{ weakly in  $L^p(\Omega;\R^n)$}
\hskip3mm \text{ and }\hskip3mm
 v_k\weakto v \hskip3mm\text{ weakly in  $L^q(\Omega;\R^n)$}\,,
  \end{equation}
and
\begin{equation}\label{equkvkequiint}
  u_k\cdot v_k \text{ is equi-integrable}.
\end{equation}
Finally assume that
\begin{equation}
  \Div u_k\to \Div u \hskip3mm\text{ in $W^{-1,1}(\Omega)$} \hskip3mm \text{ and }\hskip3mm
\curl v_k\to \curl v \hskip3mm\text{ in $W^{-1,1}(\Omega;\MM{n}{n})$}\,.
\end{equation}
Then
\begin{equation}
  u_k\cdot v_k \weakly u\cdot v \hskip1cm \text{ weakly in } L^1(\Omega)\,.
\end{equation}
\end{theoremnn}
\begin{remarks}
  \begin{enumerate}
  \item The statement is almost classical under the stronger hypothesis that
    $|u_k|^p$ and $|v_k|^q$ are equi-integrable (see the Lemma below). The main
    novelty is that here we require only that $u_k\cdot v_k$ is
    equi-integrable, and this is crucial for the application in
    \cite{ContiDolzmannKlust2}.
    \item The assumption that the inner product $u_k\cdot v_k$  is equi-integrable is
necessary as can be seen from the one dimensional example of a Fakir's
carpet. Let  $u_k=v_k$ be given on the unit interval by
$u_k =  \sqrt{k}\sum_{\ell=1}^k \chi_{[\ell/k,k^{-2}+\ell/k ]}$.
Then $u_k$ converges to zero weakly  in $L^2(0,1)$ and strongly in $L^1(0,1)$, 
but $u_k^2$ converges to one in the sense of distributions. 
  \end{enumerate}
\end{remarks}
The crucial observation in the proof is the fact that given
(\ref{equkvkequiint}) we can construct modified sequences  $\tilde u_k$ and
$\tilde v_k$ such that $\tilde u_k\cdot \tilde v_k$ has the same weak limit as
$u_k\cdot v_k$ 
and the sequences $|u_k|^p$ and $|v_k|^q$ are equi-integrable and therefore compact
in $W^{-1,p}$ and $W^{-1,q}$, respectively. The sequences are constructed
using  the biting 
lemma~\cite{BrooksChacon1980,BallMurat1989} and Lipschitz
truncations of Sobolev functions which originate in the work of Liu~\cite{Liu1977}
and Acerbi and Fusco~\cite{AcerbiFusco1984,AcerbiFusco1988} and have found
important applications in the vector-valued calculus of variations, see, e.g.,
\cite{BallZhang1990,Zhang1992,SM99Zhang}. 
In two dimensions, a change of variables leads to weak continuity of the determinant:
\begin{corollary}
 Let $\Omega\subset\R^2$ be an open and bounded domain with Lipschitz
 boundary, and let $\varphi_k\in W^{1,1}(\Omega;\R^2)$ be such that 
$\det\nabla \varphi_k=G^k+B^k$, with $B^k\to0$ strongly in $L^1$ and
$G^k\weakto G$ weakly in $L^2$. If the sequence $\det
 \nabla\varphi_k$ is equi-integrable, then $\det\nabla
 \varphi_k\weakto\det G$ 
 weakly in $L^1$.
\end{corollary}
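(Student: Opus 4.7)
The plan is to deduce the corollary from the main Theorem through the two-dimensional algebraic identity $\det M=M_1\cdot JM_2$, where $M_1,M_2$ are the rows of the $2\times 2$ matrix $M$ and $J$ denotes the $90^\circ$ rotation $J(a,b)=(b,-a)$; this rewrites the determinant as an inner product, precisely the object the div--curl framework is suited to. Since $\det G$ appears in the conclusion, I read the decomposition as $\nabla\varphi_k=G^k+B^k$ with matrix-valued $G^k,B^k$. Denoting the rows by $G_i^k,B_i^k$ for $i=1,2$, I set
\[
u_k:=JG_2^k,\qquad v_k:=G_1^k,
\]
so that $u_k\cdot v_k=\det G^k$, both $u_k,v_k\in L^2(\Omega;\R^2)$, and $u_k\weakto JG_2=:u$, $v_k\weakto G_1=:v$ weakly in $L^2$.

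For the div/curl hypotheses I use that $\nabla\varphi_k$ is curl-free, so each row $\nabla\varphi_i^k=G_i^k+B_i^k$ is curl-free distributionally, and the gradient contributions cancel to leave
\[
\Div u_k=\partial_1 G_{22}^k-\partial_2 G_{21}^k=-(\partial_1 B_{22}^k-\partial_2 B_{21}^k),
\]
together with an analogous formula for $\curl v_k$. Because $B^k\to 0$ in $L^1$, both are first derivatives of null sequences and hence converge to zero in $W^{-1,1}$; distributional consistency of weak $L^2$ convergence then gives $\Div u=0=\curl v$.

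The crucial remaining hypothesis is the equi-integrability of $u_k\cdot v_k=\det G^k$, which must be extracted from the assumed equi-integrability of $\det\nabla\varphi_k$. The link is the $2\times 2$ bilinear identity
\[
\det\nabla\varphi_k=\det G^k+\mathrm{cof}(G^k):B^k+\det B^k,
\]
and this is the step I expect to be the main obstacle: the error term $\mathrm{cof}(G^k):B^k$ is a pointwise product of an $L^2$-bounded factor and an $L^1$-null factor, for which no direct H\"older bound is available. The natural attack is a truncation of $G^k$ at a level $\lambda$: on $\{|G^k|\le\lambda\}$ the bound $|\mathrm{cof}(G^k):B^k|\le C\lambda|B^k|$ combined with $\|B^k\|_{L^1}\to 0$ is strong enough, while on the complement the measure estimate $|\{|G^k|>\lambda\}|\le\|G^k\|_{L^2}^2/\lambda^2$ has to be combined with the equi-integrability of $\det\nabla\varphi_k$, via the identity itself, to absorb the remaining error.

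Once equi-integrability of $\det G^k$ is in hand, the main Theorem gives $\det G^k\weakto\det G$ weakly in $L^1$. The hypothesis makes $\det\nabla\varphi_k$ weakly precompact in $L^1$, so any weak $L^1$ limit differs from $\det G$ by the weak limit of $\mathrm{cof}(G^k):B^k+\det B^k$, which the same truncation estimates force to vanish. Therefore $\det\nabla\varphi_k\weakto\det G$ weakly in $L^1$.
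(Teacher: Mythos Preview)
Your setup (writing $\det G^k$ as an inner product of rows, and checking that $\Div u_k$ and $\curl v_k$ go to zero in $W^{-1,1}$ by pushing the derivatives onto $B^k$) is exactly the paper's argument; the paper then simply says ``the Corollary follows from the Theorem'' without further comment. You are right that the Theorem, as stated, requires equi-integrability of $u_k\cdot v_k=\det G^k$, and the paper does not address this explicitly.

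The genuine gap in your proposal is the plan to \emph{derive} equi-integrability of $\det G^k$ from that of $\det\nabla\varphi_k$ via the expansion $\det\nabla\varphi_k=\det G^k+\mathrm{cof}(G^k)\!:\!B^k+\det B^k$ and a truncation on $\{|G^k|>\lambda\}$. This cannot succeed, because the conclusion is false in general. Take $\Omega=(0,1)^2$, $\varphi_k\equiv 0$, and
\[
G^k=-B^k=\sqrt{k}\,\chi_{(0,1/k)\times(0,1)}\,I.
\]
Then $G^k\weakto 0$ in $L^2$, $\|B^k\|_{L^1}=\sqrt{2/k}\to 0$, and $\det\nabla\varphi_k\equiv 0$ is trivially equi-integrable, yet $\det G^k=k\,\chi_{(0,1/k)\times(0,1)}$ is \emph{not} equi-integrable (it concentrates on $\{x_1=0\}$). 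In particular $\det G^k\not\weakto\det G=0$ weakly in $L^1$, so a direct application of the Theorem to $u_k\cdot v_k=\det G^k$ is not available. Your truncation sketch becomes circular on $\{|G^k|>\lambda\}$: using ``the identity itself'' to absorb the error reintroduces $\det G^k$ (and $\det B^k$, which need not even lie in $L^1$) on the right-hand side. The same example also breaks your final step, since there $\mathrm{cof}(G^k)\!:\!B^k+\det B^k=-\det G^k$ does not converge weakly in $L^1$ to zero.

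So the obstacle you correctly flagged is real, but it cannot be removed by proving equi-integrability of $\det G^k$; one has to argue differently (for instance, by going through the biting construction in the proof of the Theorem and comparing $\det\nabla\varphi_k$ directly with the bitten quantity on the good set, using equi-integrability of $\det\nabla\varphi_k$ on the small bad set, rather than ever asserting control of $\det G^k$ globally).
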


\section{Proofs}

We begin with the proof of the lemma that shows how equi-integrability of
$|u_k|^p$  leads to compactness of $\Div u_k$. We say that a sequence 
 $u_k\in L^p(\Omega;\R^n)$ is $L^p$-equi-integrable if
there is an increasing function
$\omega:[0,\infty) \to\R$ with $\lim_{t\to 0} \omega(t)=0$, such that
\begin{equation}\label{eqdeffequiint}
  \int_A |u_k|^p \, \dv{x}\le \omega(t) \hskip1cm \text{ for all
    $A\subset\Omega$     measurable 
    with $|A|\le t$}\,.
\end{equation}
\begin{lemma}\label{LemmaH1}
  Let $\Omega\subset\R^n$ be a bounded Lipschitz set, $1<p<\infty$, 
and let $u_k\in L^p(\Omega;\R^n)$
  be an $L^p$-equi-integrable sequence. 
If  $\Div u_k\to 0$ in $W^{-1,1}(\Omega)$, then $\Div u_k\to 0$ in
  $W^{-1,p}(\Omega)$. The analogous statements hold for $\curl u_k$ and $\nabla u_k$.
\end{lemma}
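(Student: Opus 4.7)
The plan is to dualize. Since $W^{-1,p}(\Omega)=(W^{1,q}_0(\Omega))^*$, it suffices to show that
\begin{equation*}
\sup_{\varphi\in W^{1,q}_0(\Omega),\ \|\nabla\varphi\|_q\le 1}\left|\int_\Omega u_k\cdot\nabla\varphi\,\dv{x}\right|\longrightarrow 0.
\end{equation*}
The key idea is to approximate each admissible $\varphi$ by a Lipschitz function $\varphi^\lambda$, so that the strong hypothesis $\Div u_k\to 0$ in $W^{-1,1}(\Omega)$ (which pairs against Lipschitz functions) can be invoked, and then to absorb the defect $\varphi-\varphi^\lambda$ using the $L^p$-equi-integrability of $\{u_k\}$.

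For each $\lambda>0$, Lipschitz truncation of (the zero extension of) $\varphi$ produces $\varphi^\lambda\in W^{1,\infty}_0(\Omega)$ with $\|\nabla\varphi^\lambda\|_\infty\le C\lambda$ and an exceptional set $E_\lambda=\{\varphi\neq\varphi^\lambda\}$ of measure $|E_\lambda|\le C\lambda^{-q}\|\nabla\varphi\|_q^q\le C\lambda^{-q}$. Since $\nabla\varphi=\nabla\varphi^\lambda$ a.e.\ on $\Omega\setminus E_\lambda$, we split
\begin{equation*}
\int_\Omega u_k\cdot\nabla\varphi\,\dv{x}
= -\langle\Div u_k,\varphi^\lambda\rangle
+ \int_{E_\lambda}u_k\cdot\nabla(\varphi-\varphi^\lambda)\,\dv{x}.
\end{equation*}
The first term is bounded by $C\lambda\,\|\Div u_k\|_{W^{-1,1}(\Omega)}$, which tends to $0$ as $k\to\infty$ for fixed $\lambda$. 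For the second, H\"older and $L^p$-equi-integrability give, uniformly in $k$ and in $\varphi$ with $\|\nabla\varphi\|_q\le 1$,
\begin{equation*}
\|u_k\|_{L^p(E_\lambda)}\,\|\nabla(\varphi-\varphi^\lambda)\|_{L^q(E_\lambda)}\le \omega(|E_\lambda|)^{1/p}\cdot C\|\nabla\varphi\|_q\le C\,\omega(C\lambda^{-q})^{1/p},
\end{equation*}
where we used $\|\nabla\varphi^\lambda\|_{L^q(E_\lambda)}\le C\lambda|E_\lambda|^{1/q}\le C\|\nabla\varphi\|_q$. Choosing $\lambda=\lambda_k\to\infty$ slowly enough that also $\lambda_k\|\Div u_k\|_{W^{-1,1}}\to 0$ makes both contributions vanish uniformly in $\varphi$, and this proves the assertion. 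The statements for $\curl$ and $\nabla$ follow by the same reasoning applied to vector- or matrix-valued Lipschitz truncations.

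The main obstacle is the Lipschitz truncation step itself. One needs a version that respects the zero boundary condition, so that $\varphi^\lambda\in W^{1,\infty}_0(\Omega)$ is admissible against $\Div u_k\in W^{-1,1}(\Omega)$, together with the quantitative measure bound $|E_\lambda|\le C\lambda^{-q}\|\nabla\varphi\|_q^q$ that is essential for the uniformity in $\varphi$. Granting this (classical since Liu and Acerbi--Fusco, using the maximal function of $\nabla\varphi$ and a cutoff near $\partial\Omega$ exploiting that $\varphi$ already vanishes there), the rest is soft: two applications of H\"older and a diagonal choice $\lambda=\lambda_k$.
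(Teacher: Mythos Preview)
Your proposal is correct and follows essentially the same route as the paper: dualize, Lipschitz-truncate the test function $\varphi$ at level $\lambda$ (the paper's $t$), split the pairing into the truncated part controlled by $\lambda\|\Div u_k\|_{W^{-1,1}}$ and the exceptional-set part controlled via H\"older and $L^p$-equi-integrability by $\omega(C\lambda^{-q})^{1/p}$, then let $\lambda\to\infty$ at a rate tied to $\|\Div u_k\|_{W^{-1,1}}$. The only cosmetic difference is that the paper makes the explicit choice $t=\|\Div u_k\|_{W^{-1,1}}^{-1/2}$ where you argue by a diagonal $\lambda_k\to\infty$; the estimates and the cited Lipschitz-truncation input are the same.
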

\begin{proof}
Let $\omega$ be as in (\ref{eqdeffequiint}).
  By definition and density of $C_0^\infty(\Omega)$ in $W^{1,q}_0(\Omega)$, 
  \begin{equation}
    \|\Div u_k\|_{W^{-1,p}(\Omega)}=\sup \left\{ \int_\Omega \nabla
      \varphi\cdot u_k \, \dv{x} 
\colon \varphi\in
      C_0^\infty(\Omega), \int_\Omega|\nabla \varphi|^q \dv{x}\le 1\right\}\,,
  \end{equation} 
  where $q$ is given by $1/p+1/q=1$.
  Fix $\varphi\in C_0^\infty(\Omega)$ with $\|\nabla\varphi\|_q\le 1$ and $t>0$. By the
  truncation argument in~\cite[Lemma 4.1]{DolzmannHungerbuehlerMueller2000} 
or~\cite[Prop. A.2]{FJM02}
   there is a $t$-Lipschitz function $\psi\in
   W^{1,\infty}_0(\Omega)$ such that the measure of the set
   $M=\{ \psi\ne \varphi \text{ or } \nabla\psi\ne \nabla\varphi\}$ is bounded by $c_*/t^q$, where $c_*$  depends 
  only on  $\Omega$. We decompose
  \begin{equation}
    \int_\Omega \nabla \varphi\cdot u_k \, \dv{x}= 
    \int_\Omega (\nabla \varphi-\nabla\psi)\cdot u_k \, \dv{x}+\int_\Omega
    \nabla\psi\cdot u_k\, \dv{x}\,.
  \end{equation}
  The second term is bounded by
  $\|\nabla\psi\|_{L^\infty}\|\Div u_k\|_{W^{-1,1}}$.
  The first term is   concentrated on the set $M$, and by H\"older's 
inequality can be estimated by
  \begin{alignat}{1}
\int_M (\nabla \varphi-\nabla\psi)\cdot u_k\, \dv{x}
    &\le \left(\int_M (|\nabla\varphi|+t)^q\, \dv{x}\right)^{1/q} 
\left(\int_M |u_k|^p\, \dv{x}\right)^{1/p}\,.
  \end{alignat}
  The first factor is bounded by 
$\|\nabla \varphi\|_{L^q(M)}+|M|^{1/q}t\leq 1+c_\ast^{1/q}$, the second by
$\left(\omega(c_* t^{-q}) \right)^{1/p} $ in view of 
the equi-integrability of the sequence $|u_k|^p$, and we conclude that
  \begin{equation}
    \|\Div u_k\|_{W^{-1,p}(\Omega)}\le \left( 1+c_\ast^{1/q} \right)\,
\left(\omega(c_* t^{-q}) \right)^{1/p} 
    + t \|\Div u_k\|_{W^{-1,1}(\Omega)}\,,
  \end{equation}
    with $\omega$ as in (\ref{eqdeffequiint}). 
The assertion follows with $t=\|\Div u_k\|_{W^{-1,1}(\Omega)}^{-1/2}$.
\end{proof}

\begin{proof}[Proof of the Theorem]
We divide the proof into four steps. The first three treat the
case $u=v=0$.\\
{\em Step 1: Modification of  $u_k$ and $v_k$ to obtain $L^p$ and $L^q$--equi-integrable sequences, respectively.}
The sequence $|u_k|^p$ is bounded in $L^1$, and therefore the biting lemma
\cite{BallMurat1989,SaadouneValadier1995}
implies the existence of a sequence of sets $A_k\subset \Omega$ such 
that $|A_k|\to0$ and, after extracting a subsequence, $|u_k|^p\chi_{\Omega\setminus A_k}$ is
equi-integrable. Set $\tilde u_k=u_k \chi_{\Omega\setminus
  A_k}$.
Since  $\|\tilde u_k-u_k\|_{L^1(\Omega)}=\|u_k\|_{L^1(A_k)}\le |A_k|^{1/q}\|u_k\|_{L^p(\Omega)}$
it follows that
\begin{equation}
\tilde u_k-u_k\to 0\hskip5mm\text{ in }L^1(\Omega)\,.
\end{equation}
Therefore the two sequences $u_k$, $\tilde u_k$ have the
same weak limit (in $L^p$). Furthermore, $\nabla (\tilde u_k-u_k)\to0$ in
$W^{-1,1}(\Omega;\MM{n}{n})$, and therefore $\Div \tilde u_k\to0$ in
$W^{-1,1}(\Omega)$.   One proceeds analogously with $v_k$, obtains the
corresponding sets $B_k$ and
a sequence $\tilde v_k=v_k\chi_{\Omega\setminus B_k}$.
To conclude this step it  remains to prove that
$u_k\cdot v_k-\tilde u_k\cdot \tilde v_k\weakly 0$ in $L^1$. To see
this, we observe that this expression vanishes outside of $A_k\cup B_k$, and that
it equals $u_k\cdot v_k$ on this set. By equi-integrability of $u_k\cdot v_k$
 and the fact that $|A_k\cup B_k|\to0$, we conclude that 
$u_k\cdot v_k-\tilde u_k\cdot \tilde v_k\to0$ in $L^1$. \\
{\em Step 2: Strong $W^{-1,p}$ convergence and reduction to the classical
  div-curl Lemma.} 
The sequence $\tilde u_k$ is $L^p$-equi-integrable, and its divergence
converges strongly to zero in $W^{-1,1}$. Therefore by the Lemma 
 we
obtain that $\Div \tilde u_k\to 0$ in $W^{-1,p}(\Omega)$. Analogously one shows
that $\curl \tilde v_k\to0$ in $W^{-1,q}(\Omega)$. By the classical div-curl
Lemma we then conclude that $\tilde u_k\cdot \tilde v_k \weaklystar 0$
in $\mathcal D^\prime(\Omega)$.\\
%
%
%
{\em Step 3: Identification of the $L^1$-weak limit.}
Since the sequence $u_k\cdot v_k$ is by assumption
equi-integrable it has a subsequence which converges
weakly in $L^1$. The same holds for $\widetilde u_k\cdot \widetilde v_k$. But the two limits are
the same (Step 1) and the latter is zero (Step 2). Since the limit does not
depend on the subsequence, the entire sequence converges. This concludes the
proof if $u=v=0$.\\ 
{\em Step 4: General case.}
We set $\tilde u_k=u_k-u$, $\tilde v_k=v_k-v$. Equi-integrability of the sequence $\tilde u_k\cdot
\tilde v_k$ follows from $\int_A |u_k\cdot v| \, \dv{x} \le
\|u_k\|_{L^p(\Omega)} \|v\|_{L^q(A)}$ for all $A\subset\Omega$ (and analogously
for $u\cdot v_k$). By Steps 1-3,
  $\tilde u_k\cdot \tilde v_k\weakto 0$ weakly in $L^1(\Omega)$.
The proof is concluded observing that 
$u_k\cdot v$ and $u\cdot v_k$ converge weakly   in $L^1$ to $u\cdot v$.
\end{proof}
\begin{proof}[Proof of the Corollary]
  Let $u_k=(e_1\cdot G^k)^\perp=(-G^k_{12}, G^k_{11})$, $v_k=e_2\cdot 
  G^k=(G^k_{21}, G^k_{22})$, so that   $\det
  G^k=u_k\cdot  v_k$. Since 
$G^k+B^k$ is a gradient, $\Div u_k=\partial_1 B^k_{12}-\partial_2 B^k_{11}$,
and therefore $\|\Div u_k\|_{W^{-1,1}}\le \|B_k\|_{L^1}\to0$. The same estimate
holds for  $\curl v_k$. At this point the Corollary follows from the Theorem.
\end{proof}

\section*{Acknowledgements}
This work was partially supported by the Deutsche Forschungsgemeinschaft
through FOR 797  {\em Analysis and computation of
  microstructure in finite plasticity},  
projects Co304/4-1, Do633/2-1, Mu1067/9-1. 

\providecommand{\bysame}{\leavevmode\hbox to3em{\hrulefill}\thinspace}
\providecommand{\MR}{\relax\ifhmode\unskip\space\fi MR }
\providecommand{\MRhref}[2]{%
  \href{http://www.ams.org/mathscinet-getitem?mr=#1}{#2}
}
\providecommand{\href}[2]{#2}

\end{document}